\theoremstyle{definition}
\newtheorem{definition}{Definition}[section]
\newtheorem{example}[definition]{Example}
\theoremstyle{plain}
\newtheorem{proposition}[definition]{Proposition}
\numberwithin{equation}{section}
\begin{document}
\author{Kamran Alam Khan}
\address{Department of Mathematics \\
 V. R. A. L. Govt. Girls P. G. College \\
 Bareilly (U.P.)-INDIA}
\email{kamran12341@yahoo.com}

\title{Generalized Fuzzy metric Spaces\\
 with an application to Colour image filtering}

\begin{abstract}
\noindent
Impulsive noise is a problem encountered during the acquisition and transmission of digital images. Fuzzy metrics dealing nicely with the nonlinear nature of digital images are used in vector median-based filters for noise reduction in colour and multichannel images. In this paper, We generalize the concept of Fuzzy metric space (In the sense of George and Veeramani) and introduce the notion of \emph{Generalized Fuzzy $n$-Metric Space}. The theory for such spaces is developed and as practical application, we propose some new filters based on these Generalized fuzzy metrics for colour image processing.
\end{abstract}
\subjclass[2010]{03E72, 54A40, 54E35, 62H35,68U10}
\keywords{Fuzzy metric space, Generalized Fuzzy $n$-Metric Space,vector median filters,colour image processing }
\maketitle
\section{Introduction}
 The notion of Fuzzy sets introduced by Zadeh \cite{33} paves a way to embrace upon vagueness and uncertainties arising in every day life . The Fuzzy theory has become an active area of research for the last Fifty years. The notion of Fuzzy metric evolved in two different perspectives. One group of researchers (e.g. \cite{8},\cite{11} ) following Kaleva \cite{12} uses fuzzy numbers to define metric as a non-negative real-valued function in ordinary spaces. The other group (e.g. \cite{5},\cite{7}) uses real numbers to measure the distance between fuzzy sets. Kramosil and Michalek \cite{15} generalized the concept of probabilistic metric space introduced by K. Menger \cite{27} to Fuzzy situations and introduced the notion of Fuzzy metric space. George and Veeramani (\cite{8},\cite{9},\cite{10}) modified this definition of fuzzy metric space with the help of continuous t-norms and proved that the topology in this new setting is Hausdorff. It is observed (\cite{17}) that fuzzy metrics are particularly useful in improving vector median-based filters for colour image filtering when replaces classical metrics . In a colour image, each pixel may be considered as an RGB component vector with integer values lying in the interval $[0,255]$ (\cite{22},\cite{23}). If $I_i=\Big(I_i(1),I_i(2),I_i(3)\Big)\in \{0,1,...,255\}^3$ denotes the colour image vector in the RGB colour space, Most commonly the nearness or closeness degree between two pixels $I_i$ and $I_j$ is described by the classical metric $L_p$, i.e.
 \begin{equation}
 L_p(I_i,I_j)=\Big(\sum_{l=1}^{3}|I_i(l)-I_j(l)|^p\Big)^\frac{1}{p}
 \end{equation}
 Morillas et al \cite{17} applied the concept of fuzzy metric (in the sense of George and Veeramani) to image filtering by proposing a variant of Vector Median Filter(VMF) that uses a fuzzy metric as distance criterion instead of the classical metrics used in VMF.
There are several generalizations of fuzzy metric spaces (e.g. \cite{28}, \cite{29}) for more than two variables. Recently Khan (\cite{13}, \cite{14}) attempted to generalize the notion of a $G$-metric space \cite{18} to more than three variables by introducing the notion of a $K$-metric, and later the notion of a generalized $n$-metric.  In this paper, We combine the concept of fuzzy metric space (in the sense of George and Veeramani)  with that of generalized $n$-metric space to introduce the notion of \emph{Generalized fuzzy $n$-metric space}. The theory for such spaces is developed and some new filters are proposed for colour image processing.

 \section{Preliminaries}
 
\begin{definition}
(\cite{14})
Let $X$ be a non-empty set, and $\mathbb{R}^+$ denote the set of non-negative real numbers. Let $G_n\colon X^n \to \mathbb{R}^+$, $(n\ge 3)$ be a function satisfying the following properties:
\begin{itemize}
\item [[G 1]] $G_n(x_1,x_2,...,x_n)=0$ if $x_1=x_2=\dots =x_n$,
\item [[G 2]] $G_n(x_1,x_1,...,x_1,x_2)>0$ for all $x_1$, $x_2 \in X$ with $x_1\neq x_2$,
\item [[G 3]] $G_n(x_1,x_1,...,x_1,x_2)\leq G_n(x_1,x_2,...,x_n)$ for all $x_1$, $x_2,... ,x_n \in X$ with the condition that any two of the points $x_2,\cdots ,x_n$ are distinct,
\item [[G 4]] $G_n(x_1,x_2,...,x_n)=G_n(x_{\pi(1)},x_{\pi (2)},...,x_{\pi(n)})$, for all $x_1$, $x_2,...,x_n \in X$ and every permutation $\pi$ of $\{1,2,...n\}$,  
\item [[G 5]] $G_n(x_1,x_2,...,x_n)\le G_n(x_1,x_{n+1},...,x_{n+1})+G_n(x_{n+1},x_2,...,x_n)$ for all $x_1$,$x_2,...,x_n,x_{n+1}\in X$. 
 
\end{itemize}
Then the function $G_n$ is called a \emph{Generalized $n$-metric} on $X$, and the pair $(X,G_n)$ a  \emph{Generalized n-metric space}.
\end{definition}
For $n=3$, $G_3(x_1,x_2,x_3)$ simply represents the $G$-metric space introduced by Mustafa and Sims \cite{18}. From now on we always have $n\ge 3$ for $(X,G_n)$ to be a generalized $n$-metric space.

\begin{example}
 Define a function $\rho\colon \mathbb{R}^n \to \mathbb{R}^+$,$(n\ge3)$ by
\begin{equation*}
\rho(x_1,x_2,\dots ,x_n)= \sum_{1\le r<s\le n}|x_r-x_s|
\end{equation*}
for all $x_1$, $x_2,...,x_n \in X$. Then $( \mathbb{R}, \rho)$ is a generalized $n$-metric space.
\end{example}
\begin{example}
\label{exmpl} For any metric space $(X,d)$, the following functions define generalized $n$-metrics on $X$:
\begin{itemize}
\item [(1)] $K_1^d(x_1,x_2,...,x_n)=\sum_{1\le r<s\le n} d(x_r,x_s)$,

\item [(2)] $K_2^d(x_1,x_2,...,x_n)=\text{max}\{d(x_r,x_s)\colon r,s\in \{1,2,...,n\},r<s\}$.
\end{itemize}

\end{example}
\begin{definition} \cite{8}
A binary operation $\ast: [0,1]\times [0,1]\to [0,1]$ is a continuous $t$-norm if it satisfies the following conditions:
\begin{itemize}
\item[(1)] $\ast$ is associative and commutative;
\item[(2)] $\ast$ is continuous;
\item[(3)] $a\ast 1=a$ for all $a\in [0,1]$;
\item[(4)] $a\ast b\le c\ast d$ whenever $a\le c$ and $b\le d$
\end{itemize}
for each $a,b,c,d\in [0,1]$.
\newline The examples of continuous $t$-norm are $a\ast b=ab$ and $a\ast b=\text{min} (a,b)$.
\end{definition}
Now We give the definition of fuzzy metric space introduced by George and Veeramani\cite{8} which constitutes a slight modification of the one given by Kramosil and Michalek \cite{15}.  
\begin{definition} \cite{8} \label{deffuzzy}A Fuzzy Metric Space is a triple  $(X, M, \ast )$ where $X$ is a nonempty set, $\ast$ is a continuous t-norm and $ M: X \times X \times (0,\infty) \to [0, 1]$ is a mapping (called fuzzy metric) which satisfies the following properties: for
every $x,y,z \in X$  and $s,t >0$ 
\begin{itemize}
\item [[FM 1]] $M(x, y, t) > 0$;
 
\item [[FM 2]] $ M(x, y, t) = 1$ if and only if $x = y$;
\item [[FM 3]] $M(x, y, t) = M(y, x, t)$;
\item [[FM 4]] $M(x, z, t + s) \ge M(x, y, t) \ast M(y, z, s)$  
\item [[FM 5]] $ M (x,y, \ast):  (0,\infty) \to (0, 1]$ is continuous. 
\end{itemize}
Then $M$ is called a fuzzy metric on $X$ and $M(x, y, t)$ denotes the degree of nearness between $x$ and $y$ with respect to $t$. \end{definition}
Throughout this paper, by a fuzzy metric $M(x,y,t)$ we always mean a fuzzy metric in the sense of George and Veeramani. 

\begin{example}  Let $X$ be a non-empty set and $d$ is a metric on $X$. Denote $a\ast b = a.b$ for all $a, b \in [0, 1]$. For each $t > 0$, define
\begin{equation*}
M_d(x,y,t)= \frac{t}{t+d(x,y)}
\end{equation*}

Then $(X,M_d, \ast)$ is a fuzzy metric space . We call this fuzzy metric $M_d$ induced by the metric $d$ the standard fuzzy metric.
\end{example}
\begin{definition} \cite{25}
The $3$-tuple $(X,N, \ast)$ is said to be a fuzzy normed space if $X$ is
a vector space, $\ast$ is a continuous $t$-norm and $N$ is a fuzzy set on $X\times(0,\infty)$
satisfying the following conditions for every $x, y \in X$ and $t, s > 0$:
\begin{itemize}
\item[[N 1]] $N(x, t) > 0$,
\item[[N 2]] $N(x, t) = 1$ iff $x = 0$,
\item[[N 3]] $ N(\alpha x,t)= N(x,\frac{t}{|\alpha |} )$, for all $\alpha \ne 0$,
\item[[N 4]] $N(x, t)\ast N(y, s)\le N(x + y, t + s)$,
\item[[N 5]] $N(x, .)\colon (0,\infty) \to [0, 1]$ is continuous,
\item[[N 6]] $\lim_{t\to \infty} N(x, t) = 1$.
\end{itemize}
\end{definition}

\begin{proposition}\cite{25} \label{fuzzynorm}
Let $(X,N,\ast)$ be a fuzzy normed space. If we define $M(x,y,t)=N(x-y,t)$, then $M$ is a fuzzy metric on $X$, which is called the fuzzy metric induced by the
fuzzy norm $N$.
\end{proposition}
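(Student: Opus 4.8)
The plan is to verify the five George--Veeramani axioms [FM 1]--[FM 5] directly for the candidate $M(x,y,t)=N(x-y,t)$, reading each one off from the corresponding fuzzy-norm axiom; well-definedness is automatic since $x-y\in X$ and $N$ maps $X\times(0,\infty)$ into $[0,1]$. First I would dispatch [FM 1] and [FM 2]: positivity $M(x,y,t)=N(x-y,t)>0$ is immediate from [N 1], and $M(x,y,t)=1\iff N(x-y,t)=1\iff x-y=0\iff x=y$ is exactly [N 2]. Note also that [N 1] already forces $N$ (hence $M$) to take values in $(0,1]$, so the codomain implicit in [FM 5] is met.

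The one step requiring a small trick is the symmetry axiom [FM 3]. Here I would write $y-x=(-1)(x-y)$ and apply the homogeneity axiom [N 3] with scalar $\alpha=-1$, giving $M(y,x,t)=N(y-x,t)=N\big((-1)(x-y),t\big)=N\big(x-y,\tfrac{t}{|-1|}\big)=N(x-y,t)=M(x,y,t)$. For the triangle inequality [FM 4], the key observation is the telescoping identity $x-z=(x-y)+(y-z)$ in the vector space $X$; substituting into [N 4] yields $M(x,y,t)\ast M(y,z,s)=N(x-y,t)\ast N(y-z,s)\le N\big((x-y)+(y-z),t+s\big)=N(x-z,t+s)=M(x,z,t+s)$. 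Finally, [FM 5] is inherited verbatim: for fixed $x,y$ the map $t\mapsto M(x,y,t)=N(x-y,t)$ is continuous on $(0,\infty)$ by [N 5].

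I do not anticipate a genuine obstacle here; the proof is a routine translation. The only place to be mildly careful is the symmetry step, where one must invoke [N 3] for a negative scalar rather than a positive one. It is also worth remarking that axiom [N 6], $\lim_{t\to\infty}N(x,t)=1$, plays no role in establishing that $M$ is a fuzzy metric (it would matter only for finer regularity or limit properties of $M$), so the fuzzy-metric conclusion uses exactly [N 1]--[N 5].
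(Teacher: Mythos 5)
Your verification is correct and complete: positivity and the identity-of-indiscernibles axioms follow directly from [N 1] and [N 2], symmetry from [N 3] with $\alpha=-1$, the triangle inequality from the decomposition $x-z=(x-y)+(y-z)$ together with [N 4], and continuity in $t$ from [N 5], with your remark that [N 6] is not needed also accurate. Note that the paper states this proposition only as a citation to Saadati and Vaezpour and gives no proof of its own, so there is no alternative argument to compare with; your axiom-by-axiom translation is the standard one.
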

 A topological space $(X, \tau )$ is said to be fuzzy metrizable if there exists a fuzzy metric $M$ on $X$ such that $\tau = \tau_M$. It was proved by George and Veeramani (\cite{8},\cite{9}) that every fuzzy metric $M$ on $X$ generates a topology $\tau_M$ on $X$. The family of open sets $\{B_M(x, r, t)\colon  x\in X,\; 0 < r < 1,\;t > 0\}$ forms a base for this topology, where $B_M(x, r, t) = \{y \in X : M(x, y, t) > 1-r\}$ for every $r, 0<r<1 $ and $t > 0$. This topological space is first countable and Hausdorff. Also for a metric space $(X, d)$, the topology generated by $d$ coincides with the topology $\tau_{M_d}$ generated by the standard fuzzy metric $M_d$. Thus every metrizable topological space is fuzzy metrizable. Gregori and Romaguera \cite{11} proved that if $(X,M, \ast)$ is a fuzzy metric space, then $\{U_n : n \in \mathbb{N}\}$ is a base for a uniformity $\mathcal{U}_M$ compatible with $\tau_M$, where $U_n = \{(x, y) \colon M(x, y, \frac{1}{n}) > 1-1/n\}$ for all $n \in \mathbb{N}$. Hence the topological space $(X, \tau_M)$ is metrizable.

\section{Main results}
Now we introduce the concept of \emph{Generalized Fuzzy $n$-metric space} as a generalization of the definition~\ref{deffuzzy}.
\begin{definition} \label{defgfuzzy} A 3-tuple $(X, F_n, \ast )$ is called \emph{Generalized Fuzzy $n$-metric space}
 if $X$ is an arbitrary (non-empty) set, $\ast$ is a continuous $t$-norm, and $F_n$ is a fuzzy set on $X^n \times (0,\infty)$ satisfying the following conditions for each $x_1,x_2,\dots x_n \in X$ and $t, s > 0$: 
\begin{itemize}
\item [[M 1]] $F_n(x_1,x_1,...,x_1,x_2,t)>0$ for all $x_1,x_2 \in X $ with $x_1\ne x_2$;
 
\item [[M 2]] $F_n(x_1,x_1,...,x_2,t) \ge F_n(x_1,x_2,...,x_n,t)$ with the condition that at least any two of the points  $x_2,x_3,\dots x_n$ are distinct;
\item [[M 3]] $F_n(x_1,x_2,...,x_n,t)=1$ if and only if $x_1=x_2= \dots =x_n$;

\item [[M 4]]  $F_n(x_1,x_2,...,x_n,t)=F_n(x_{\pi(1)},x_{\pi(2)},...,x_{\pi(n)},t)$  for every permutation $\pi$ of $\{1,2,...,n\}$;   
\item [[M 5]] $F_n(x_1,x_{n+1},...,x_{n+1},t)\ast F_n(x_{n+1},x_2,...,x_n,s)\le F_n(x_1,x_2,...,x_n,t+s)$; 
\item[[M 6]] $F_n(x_1,x_2,...,x_n,.)\colon (0,\infty) \to [0,1]$ is continuous.
\end{itemize}
\end{definition}
\begin{example}  Let $(X,G_n)$ be an Generalized $n$-metric space.  Denote $a\ast b = a.b$ for all $a, b \in [0, 1]$. For each $t > 0$, define
\begin{equation*}
F_n(x_1,x_2,...,x_n,t)= \frac{t}{t+G_n(x_1,x_2,...,x_n)}
\end{equation*}
for all $x_1,x_2,\dots x_n \in \mathbb{R}$. Then $(X, F_n,\ast)$ is a Generalized fuzzy $n$-metric space.
\end{example}
\begin{proposition} \label{propfuzzy1}
Let $(X, F_n,\ast)$ be a generalized fuzzy $n$-metric space. Then for $x,y\in X$ and $t>0$, we have
\begin{equation}
F_n(x,y,y,...y,t)\ge \Big[F_n\big(y,x,x,..x,\frac{t}{n-1}\big)\Big]^{n-1}
\end{equation}
\end{proposition}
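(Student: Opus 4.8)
The plan is to derive the inequality by iterating the polygonal inequality [M 5], always using $x$ itself as the intermediate ("pivot") point and cutting the time variable $t$ into $n-1$ equal slices of size $\tfrac{t}{n-1}$; the permutation axiom [M 4] will then let me recognise each factor that appears as a copy of $F_n(y,x,x,\dots,x,\tfrac{t}{n-1})$. I would first observe that the obvious move — pivoting at $x$ on the coordinate that already equals $x$ — is useless, because by [M 3] and the identity $1\ast a=a$ it only yields $F_n(x,y,\dots,y,t)\ge F_n(x,y,\dots,y,s)$ for $s<t$; the productive move is to peel off the copies of $y$ one at a time, each time turning a $y$ into an $x$.

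To keep the combinatorics under control I would, for $1\le k\le n-1$, write $G_k(\tau)$ for the value (well defined by [M 4]) of $F_n$ on any $n$-tuple with exactly $k$ entries equal to $x$ and $n-k$ entries equal to $y$, and set $H(\tau)=F_n(y,x,x,\dots,x,\tau)=G_{n-1}(\tau)$; the left-hand side of the proposition is then $G_1(t)$. The heart of the argument is the recursion
\[
G_k(\tau)\ \ge\ H(\tau_1)\ast G_{k+1}(\tau_2)\qquad (1\le k\le n-2,\ \ \tau_1+\tau_2=\tau),
\]
which I would obtain by rewriting $G_k(\tau)$ via [M 4] as $F_n\big(y,\underbrace{x,\dots,x}_{k},\underbrace{y,\dots,y}_{\,n-1-k},\tau\big)$ and then applying [M 5] with first coordinate $y$, intermediate point $x$ and split $\tau=\tau_1+\tau_2$: the first factor produced is $F_n(y,x,\dots,x,\tau_1)=H(\tau_1)$, and the second is $F_n$ of a tuple with $k+1$ copies of $x$ and $n-1-k$ copies of $y$, i.e.\ $G_{k+1}(\tau_2)$ after one more use of [M 4].

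Next I would iterate this recursion with $\tau_1=\tfrac{t}{n-1}$ at every stage, choosing the slices so that $G_k$ is always evaluated at $\tfrac{(n-k)t}{n-1}$, and run a downward induction on $k$, from the base case $k=n-1$ (where $G_{n-1}(\tfrac{t}{n-1})=H(\tfrac{t}{n-1})$ by [M 4]) down to $k=1$, with conclusion
\[
G_k\!\Big(\tfrac{(n-k)\,t}{\,n-1}\Big)\ \ge\ \Big[\,H\!\big(\tfrac{t}{n-1}\big)\Big]^{\,n-k}.
\]
The inductive step uses only the recursion above, the associativity of $\ast$, and the monotonicity axiom $(4)$ of the $t$-norm (from $G_{k+1}\ge[H]^{\,n-k-1}$ one deduces $H\ast G_{k+1}\ge H\ast[H]^{\,n-k-1}=[H]^{\,n-k}$). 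Taking $k=1$ then gives precisely $F_n(x,y,y,\dots,y,t)=G_1(t)\ge\big[F_n(y,x,x,\dots,x,\tfrac{t}{n-1})\big]^{\,n-1}$, the exponent being understood as the $(n-1)$-fold $\ast$-product (which for the product $t$-norm $a\ast b=ab$ is the ordinary $(n-1)$-st power).

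I expect the only genuine difficulty to be the bookkeeping: one must pick the pivot in [M 5] so that, after the permutation step, the "tail" factor is again of the form $G_{k+1}$ — carrying exactly one more $x$ and one fewer $y$ — and one must keep the time slices summing to $t$. Beyond that, the argument is a purely mechanical iteration of [M 5] together with the associativity and monotonicity of the $t$-norm, and it uses only [M 4], [M 5] and the $t$-norm axioms.
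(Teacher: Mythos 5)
Your argument is correct and is exactly the intended route: the paper simply asserts the result is ``a direct consequence of [M 5]'', and your iteration of [M 5] with pivot $x$, equal time slices $\tfrac{t}{n-1}$, and the permutation axiom [M 4] is precisely the bookkeeping that assertion leaves implicit. Your closing remark about reading the $(n-1)$-st power as the $(n-1)$-fold $\ast$-product (coinciding with the ordinary power for the product $t$-norm used in the paper's examples) is also the right reading of the statement.
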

\begin{proof}
The result is a direct consequence of [M 5] of definition~\ref{defgfuzzy}. 
\end{proof}
\begin{definition}
Let $(X, F_n,\ast)$ be a generalized fuzzy $n$-metric space. A subset $A$ of $X$ is said to be $F$-bounded if there exist $t>0$ and $r\in (0,1)$ such that
\begin{equation}
F_n(x_1,x_2,...,x_n,t)>1-r \; \text{for all} \; x_1,x_2,...,x_n\in A 
\end{equation}
\end{definition}
\begin{definition}
A generalized fuzzy $n$-metric $(F_n,\ast)$ on $X$ is said to be stationary if $F_n$ does not depend
on $t$, i.e. for each $x_1,x_2,...,x_n\in X$ the function $F_n(x_1,x_2,...,x_n,t)$ is constant.
\end{definition}

\begin{proposition}
 \label{propgfuzzy2}
Let $X$ be a closed real interval  $\left[a,b\right]$ and $K>\left|a\right|>0$. Consider for each $n=1,2,...$ the function ${F_r}^{(n)}\colon \underbrace{X^n\times X^n\times ... \times X^n}_r \times(0,\infty)\to (0,1]$ given by 
\begin{equation}
{F_r}^{(n)}(x^1,x^2,...,x^r,t)=\prod_{i=1}^n\frac{\min\{x^1_i,x^2_i,...,x^r_i\}+K }{\max\{x^1_i,x^2_i,...,x^r_i\}+K}
\end{equation}
Where $x^j=(x^j_1,x^j_2,...,x^j_n),\;\; j=1,2,..,r$ and $t>0$. Then $(X^n,{F_r}^{(n)},\ast)$ is a stationary $F$-bounded Generalized fuzzy $r$-metric space, where $a\ast b=a \cdot b$ for all $a,b\in [0,1]$. 
\end{proposition}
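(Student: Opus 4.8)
The plan is to verify first that $F_r^{(n)}$ is well defined with values in $(0,1]$, then to check the axioms [M 1]--[M 6] of Definition~\ref{defgfuzzy} together with stationarity and $F$-boundedness, leaving the triangle-type axiom [M 5] for last as the only non-routine point. The observation used everywhere is that $K>\left|a\right|$ forces $x+K\ge a+K>0$ for every $x\in X=[a,b]$; hence for any $x^1,\dots,x^r\in X^n$ and every $i$ the factor $\frac{\min\{x^1_i,\dots,x^r_i\}+K}{\max\{x^1_i,\dots,x^r_i\}+K}$ has positive numerator not exceeding its (positive) denominator, so it lies in $(0,1]$ and therefore so does the product over $i=1,\dots,n$. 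Stationarity is immediate from the formula (there is no $t$), whence [M 6] is trivial; and since each such factor is at least $\frac{a+K}{b+K}$, we get $F_r^{(n)}\ge\left(\frac{a+K}{b+K}\right)^{n}>0$ identically, so the choice $\varepsilon=1-\tfrac12\left(\frac{a+K}{b+K}\right)^{n}\in(0,1)$ (the degenerate case $a=b$ being trivial) witnesses $F$-boundedness.

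Among the remaining axioms, [M 4] holds because $\min$ and $\max$ of $\{x^1_i,\dots,x^r_i\}$ are symmetric functions of the arguments, so permuting $x^1,\dots,x^r$ changes no factor. For [M 3], if $x^1=\dots=x^r$ then $\min=\max$ in every factor and the product is $1$, while conversely a product of numbers in $(0,1]$ that equals $1$ forces each factor to be $1$, i.e.\ $\min\{x^1_i,\dots,x^r_i\}=\max\{x^1_i,\dots,x^r_i\}$ for all $i$, hence $x^1=\dots=x^r$. Axiom [M 1] is just the positivity already noted. For [M 2] I would compare factorwise: the $i$-th factor of $F_r^{(n)}(x^1,x^1,\dots,x^1,x^2,t)$ is $\frac{\min\{x^1_i,x^2_i\}+K}{\max\{x^1_i,x^2_i\}+K}$, and since $\min\{x^1_i,x^2_i\}\ge\min\{x^1_i,\dots,x^r_i\}$ while $\max\{x^1_i,x^2_i\}\le\max\{x^1_i,\dots,x^r_i\}$, this factor is at least the corresponding factor of $F_r^{(n)}(x^1,\dots,x^r,t)$; multiplying over $i$ gives [M 2] (in fact without using the distinctness hypothesis).

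The crux is [M 5]. Since $\ast$ is ordinary multiplication and $F_r^{(n)}$ is stationary, it suffices to prove the inequality coordinate by coordinate and then multiply the $n$ resulting positive inequalities. Fix $i$ and put $p=x^1_i+K$, $q=x^{r+1}_i+K$, $m=\min\{x^2_i+K,\dots,x^r_i+K\}$ and $M=\max\{x^2_i+K,\dots,x^r_i+K\}$, so $0<m\le M$. Because a repeated entry does not affect $\min$ or $\max$, the $i$-th factors of the three terms of [M 5] are, respectively, $\frac{\min\{p,q\}}{\max\{p,q\}}$, $\phi(q)$ and $\phi(p)$, where $\phi(x):=\frac{\min\{x,m\}}{\max\{x,M\}}$. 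Thus the whole of [M 5] reduces to the single inequality $\frac{\min\{p,q\}}{\max\{p,q\}}\,\phi(q)\le\phi(p)$, valid for all $p,q>0$ (with $m,M$ fixed).

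The key lemma to isolate is that, for $0<m\le M$, the map $x\mapsto\phi(x)/x$ is non-increasing and the map $x\mapsto x\,\phi(x)$ is non-decreasing on $(0,\infty)$; this is seen immediately by splitting the half-line into the ranges $x\le m$, $m\le x\le M$ and $x\ge M$, on which $\phi(x)$ equals $x/M$, $m/M$ and $m/x$ respectively. Granting the lemma: when $p\le q$ one has $\frac{\min\{p,q\}}{\max\{p,q\}}=p/q$ and the target becomes $\phi(q)/q\le\phi(p)/p$; when $q\le p$ one has $\frac{\min\{p,q\}}{\max\{p,q\}}=q/p$ and the target becomes $q\,\phi(q)\le p\,\phi(p)$; each is exactly one of the two monotonicity statements. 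Multiplying these inequalities over $i=1,\dots,n$ yields [M 5], completing the proof. I expect this monotonicity lemma for $\phi$ to be the only genuine obstacle; all other steps are routine bookkeeping with $\min$ and $\max$.
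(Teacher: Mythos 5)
Your proof is correct, and for the one non-routine point, axiom [M 5], it takes a genuinely different route from the paper. The paper proves [M 5] by induction on the number of coordinates $n$: the base case $n=1$ is handled by enumerating the possible orderings of $x^1,\dots,x^r,x^h$ (only the case $x^1\le x^2\le\dots\le x^r\le x^h$ is written out, the rest being declared ``similar''), and the inductive step just peels off the last coordinate and multiplies. Your coordinatewise reduction is that same induction stripped of its inductive clothing --- multiplying $n$ scalar inequalities --- so nothing is lost there; the real difference is in the scalar case. Where the paper does a case-by-case check over orderings and leaves most cases (including those where the auxiliary point sits strictly between the others) to the reader, you isolate the function $\phi(x)=\min\{x,m\}/\max\{x,M\}$ and prove the single lemma that $\phi(x)/x$ is non-increasing and $x\,\phi(x)$ is non-decreasing, which disposes of every configuration of $p,q,m,M$ at once; the target inequality $\tfrac{\min\{p,q\}}{\max\{p,q\}}\phi(q)\le\phi(p)$ then falls out in two lines. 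This buys a complete, uniform argument in place of an enumeration that the paper only sketches, and your explicit verifications of [M 1]--[M 4], [M 6], stationarity and $F$-boundedness (with a concrete choice of $\varepsilon$) fill in details the paper dismisses as obvious; the paper's version, in exchange, is shorter and stays closer to the bare inequality it needs.
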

\begin{proof}
The Axioms [M 1]-[M 4] and [M 6] of the definition ~\ref{defgfuzzy} are obviously satisfied for ${F_r}^{(n)}$. We use mathematical induction to prove the triangle inequality [M 5].
For $n=1$, We have $x^j=x^j_1$ and
\begin{equation*}
{F_r}^{(1)}(x^1,x^2,...,x^r,t+s)=\frac{\min\{x^1,x^2,...,x^r\}+K }{\max\{x^1,x^2,...,x^r\}+K}
\end{equation*}
Let $x^h\in X$, then many different cases arise

i.e. $x^1\le x^2\le ...\le x^r\le x^h$;    $x^1\le x^h \le x^2 \le ... \le x^r$;  $x^h\le x^2\le ...\le x^r\le x^1$;   etc.
 
 For the case $x^1\le x^2\le ...\le x^r\le x^h$, we have
 \begin{equation*}
 \frac{x^1 + K}{x^r + K}\ge \frac{x^1 +K}{x^h + K}\cdot \frac{x^2 + K}{x^h + K}
 \end{equation*}
 Hence the inequality ${F_r}^{(1)}(x^1,x^2,...,x^r,t+s)\ge {F_r}^{(1)}(x^1,x^h,...,x^h,t)\cdot {F_r}^{(1)}(x^h,x^2,...,x^r,s)$ holds. Similar argument verifies the inequality for other cases.
  
 Now suppose the triangle inequality holds for $n=m-1$. Then for each $t,s>0$ we have
\begin{align*}
\begin{split}
{F_r}^{(m)}(x^1,x^2,...,x^r,t+s)&=\prod_{i=1}^m\frac{\min\{x^1_i,x^2_i,...,x^r_i\}+K }{\max\{x^1_i,x^2_i,...,x^r_i\}+K}\\
&=\prod_{i=1}^{m-1}\frac{\min\{x^1_i,x^2_i,...,x^r_i\}+K }{\max\{x^1_i,x^2_i,...,x^r_i\}+K} \cdot \frac{\min\{x^1_m,x^2_m,...,x^r_m\}+K }{\max\{x^1_m,x^2_m,...,x^r_m\}+K}\\
&\ge  \prod_{i=1}^{m-1}\frac{\min\{x^1_i,x^h_i,...,x^h_i\}+K }{\max\{x^1_i,x^h_i,...,x^h_i\}+K}
 \cdot \prod_{i=1}^{m-1}\frac{\min\{x^h_i,x^2_i,...,x^r_i\}+K }{\max\{x^h_i,x^2_i,...,x^r_i\}+K} \\
 & \qquad \cdot \frac{\min\{x^1_m,x^h_m,...,x^h_m\}+K }{\max\{x^1_m,x^h_m,...,x^h_m\}+K}\cdot \frac{\min\{x^h_m,x^2_m,...,x^r_m\}+K }{\max\{x^h_m,x^2_m,...,x^r_m\}+K}\\
 &=\prod_{i=1}^{m}\frac{\min\{x^1_i,x^h_i,...,x^h_i\}+K }{\max\{x^1_i,x^h_i,...,x^h_i\}+K}
  \cdot \prod_{i=1}^{m}\frac{\min\{x^h_i,x^2_i,...,x^r_i\}+K }{\max\{x^h_i,x^2_i,...,x^r_i\}+K} \\
  &={F_r}^{(m)}(x^1,x^h,...,x^h,t)\cdot {F_r}^{(m)}(x^h,x^2,...,x^r,s)
\end{split}
\end{align*}
 Thus $(X^n,{F_r}^{(n)},\cdot)$ is a Generalized fuzzy $r$-metric space for all $n(=1,2,...)$. Since it is independent of $t>0$, hence it is stationary.
 
 Now $a\le x^j_i\le b$ for all $1\le i\le n$ and $1\le j\le r$, hence for all $x^1,x^2,...,x^r\in X^n$ and $t>0$, we have $\min\{x^1_i,x^2_i,...x^r_i\}\ge a$, $\max\{x^1_i,x^2_i,...x^r_i\}\le b$ and therefore
 \begin{equation*}
 {F_r}^{(n)}(x^1,x^2,...,x^r,t)\ge \Big(\frac{a+K}{b+K}\Big)^n>0
 \end{equation*}
i.e. $X^n$ is $F$-bounded.
    
\end{proof}

\begin{definition} Let $(X, F_n,\ast)$ be a Generalized fuzzy $n$-metric space.For $t>0$, the open ball $B_F(x_0,r,t)$ with center $x_0$ and radius $0<r<1$ is defined by
\begin{equation}
B_F(x_0,r,t)=\{y\in X \colon F_n(x_0,y,y,...,y,t)>1-r\}
\end{equation}
\end{definition}
\begin{definition} A subset $A$ of $X$ is called an open set if for each $x\in A$ there exist $t>0$ and $0<r<1$ such that $B_F(x,r,t)\subset A$.
\end{definition}
\begin{proposition}
Let $(X, F_n,\ast)$ be a generalized fuzzy $n$-metric space. Define 
$\tau_F =\{A \subset X \colon  x\in A\;  \text{if and only if there exists}\; t>0,\text{and}\;  r,\,0<r<1,\text{such that}\; B_F(x,r,t)\subset A\}$. Then $\tau_F$ is a topology on $X$.
\end{proposition}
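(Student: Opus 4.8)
The plan is to follow the classical George--Veeramani strategy, adapted to the degenerate slotting of variables in axiom [M 5]. First I would record two preliminary facts. The trivial one: for any $x$, $r$, $t$ we have $x\in B_F(x,r,t)$, since $F_n(x,x,\dots,x,t)=1>1-r$ by [M 3]; consequently the ``if'' half of the defining biconditional for $\tau_F$ is automatic, and a set $A$ lies in $\tau_F$ exactly when every point of $A$ admits an open ball contained in $A$. The second preliminary fact is monotonicity of $F_n$ in time: putting $x_{n+1}=x_1$ in [M 5], using $F_n(x_1,\dots,x_1,t)=1$ (by [M 3]) together with $1\ast a=a$, yields $F_n(x_1,x_2,\dots,x_n,s)\le F_n(x_1,x_2,\dots,x_n,t+s)$ for all $s,t>0$, so each map $F_n(x_1,\dots,x_n,\cdot)$ is non-decreasing.

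The crux is to show that every open ball belongs to $\tau_F$. Given $y\in B_F(x_0,r,t)$, i.e.\ $F_n(x_0,y,\dots,y,t)>1-r$, I would use the continuity [M 6] of $\theta\mapsto F_n(x_0,y,\dots,y,\theta)$ to pick $t_0\in(0,t)$ with $\varepsilon:=F_n(x_0,y,\dots,y,t_0)>1-r$. Since $\varepsilon\ast 1=\varepsilon>1-r$ and $\ast$ is continuous, there is $\lambda\in(0,1)$ with $\varepsilon\ast(1-\lambda)>1-r$. I then claim $B_F(y,\lambda,t-t_0)\subset B_F(x_0,r,t)$: for $z$ in the former, applying [M 5] with $x_1=x_0$, $x_2=\dots=x_n=z$, $x_{n+1}=y$, and time parameters $t_0$ and $t-t_0$ gives
\[
F_n(x_0,z,\dots,z,t)\ \ge\ F_n(x_0,y,\dots,y,t_0)\ast F_n(y,z,\dots,z,t-t_0)\ \ge\ \varepsilon\ast(1-\lambda)\ >\ 1-r,
\]
where the second inequality uses $F_n(y,z,\dots,z,t-t_0)>1-\lambda$ and monotonicity of $\ast$. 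Hence $z\in B_F(x_0,r,t)$, which proves the claim; so open balls are open.

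With this in hand the three topology axioms follow routinely. $\emptyset\in\tau_F$ vacuously, and $X\in\tau_F$ because every ball lies in $X$. For an arbitrary union $\bigcup_i A_i$ of members of $\tau_F$, any point lies in some $A_{i_0}$ and hence contains a ball lying in $A_{i_0}\subset\bigcup_i A_i$. For finite intersections it suffices to treat two sets $A_1,A_2\in\tau_F$: given $x\in A_1\cap A_2$, choose $t_j>0$, $r_j\in(0,1)$ with $B_F(x,r_j,t_j)\subset A_j$, and set $t=\min\{t_1,t_2\}$, $r=\min\{r_1,r_2\}$. Using the monotonicity of $F_n$ in time together with $1-r\ge 1-r_j$, one checks $B_F(x,r,t)\subset B_F(x,r_1,t_1)\cap B_F(x,r_2,t_2)\subset A_1\cap A_2$, so $A_1\cap A_2\in\tau_F$; an easy induction handles general finite intersections.

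The main obstacle is the ``open balls are open'' step: it is where all the hypotheses are actually used --- the $t$-continuity [M 6], the triangle inequality [M 5] in the form with $n-1$ repeated slots, and the continuity and unit property of the $t$-norm near $1$ --- and one must be careful to slot the points into [M 5] so that the two factors on its left-hand side are genuine ``ball-type'' values $F_n(x_0,y,\dots,y,t_0)$ and $F_n(y,z,\dots,z,t-t_0)$. Once that lemma is secured, the remaining verifications are pure bookkeeping.
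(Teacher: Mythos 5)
Your proof is correct. The paper itself offers no argument here --- its proof is the single line ``It is straightforward'' --- so there is nothing to compare against; what you have written is the standard George--Veeramani argument adapted to the $n$-variable setting, and it supplies exactly the details the paper omits. In particular you correctly identify and carry out the one non-trivial step, that open balls are open: deriving time-monotonicity from [M~5] with $x_{n+1}=x_1$, using [M~6] to drop from $t$ to some $t_0<t$ while keeping $F_n(x_0,y,\dots,y,t_0)>1-r$, choosing $\lambda$ via continuity of $\ast$ at $(\varepsilon,1)$, and slotting $x_1=x_0$, $x_2=\dots=x_n=z$, $x_{n+1}=y$ into [M~5] so that both factors are ball-type values; the union and finite-intersection checks are routine as you say.
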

\begin{proof}
It is straightforward.
\end{proof}
\begin{proposition}
Let $(X, F_n,\ast)$ be a generalized fuzzy $n$-metric space. Then $(X,\tau_F)$ is Hausdorff.
\end{proposition}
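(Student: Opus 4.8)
The plan is to follow the classical Hausdorff argument of George and Veeramani for fuzzy metric spaces, the only genuinely new point being that the first coordinate of $F_n$ is distinguished: when one passes through an intermediate point $z$, the term that appears, $F_n(z,y,\dots ,y,s)$, has arguments that are \emph{not} a permutation of those in $F_n(y,z,\dots ,z,s)$, so [M 4] alone cannot interchange them. Proposition~\ref{propfuzzy1} is precisely the device that trades one of these for the other, at the cost of an exponent $n-1$ and a rescaling of $t$.

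First I would record that every open ball $B_F(x,r,t)$ belongs to $\tau_F$, so that it is a bona fide open neighbourhood of $x$. Given $p\in B_F(x,r,t)$, i.e.\ $F_n(x,p,\dots ,p,t)>1-r$, choose $r_0$ with $F_n(x,p,\dots ,p,t)>1-r_0>1-r$; by [M 6] there is $t_0<t$ with $F_n(x,p,\dots ,p,t_0)>1-r_0$, and since $(1-r_0)\ast 1=1-r_0>1-r$, continuity of $\ast$ gives $\delta\in(0,1)$ with $(1-r_0)\ast(1-\delta)>1-r$. Applying [M 5] with first entry $x$, all remaining entries equal to an arbitrary $w$, and intermediate point $p$, one obtains $F_n(x,w,\dots ,w,t)\ge F_n(x,p,\dots ,p,t_0)\ast F_n(p,w,\dots ,w,t-t_0)$, so that $B_F(p,\delta,t-t_0)\subseteq B_F(x,r,t)$; hence open balls are open.

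Now fix $x\ne y$ in $X$ and any $t>0$. By [M 3] the $n$-tuple $(x,y,\dots ,y)$ is not constant, so $\lambda:=F_n(x,y,\dots ,y,t)<1$, while [M 1] together with [M 4] gives $\lambda>0$; thus $\lambda\in(0,1)$. Let $\phi(a)=\underbrace{a\ast a\ast\cdots\ast a}_{n}$; this is continuous with $\phi(1)=1$, so choose $\varepsilon\in(0,1)$ with $\phi(1-\varepsilon)>\lambda$. I claim $U:=B_F\bigl(x,\varepsilon,\tfrac t2\bigr)$ and $V:=B_F\bigl(y,\varepsilon,\tfrac{t}{2(n-1)}\bigr)$ are disjoint. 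If $z\in U\cap V$ then $F_n(x,z,\dots ,z,\tfrac t2)>1-\varepsilon$ and $F_n(y,z,\dots ,z,\tfrac{t}{2(n-1)})>1-\varepsilon$, so using [M 5] (first entry $x$, remaining entries $y$, intermediate point $z$, split time $t=\tfrac t2+\tfrac t2$) and then Proposition~\ref{propfuzzy1} on the second factor,
\[
\lambda \;\ge\; F_n\bigl(x,z,\dots ,z,\tfrac t2\bigr)\ast F_n\bigl(z,y,\dots ,y,\tfrac t2\bigr)\;\ge\; F_n\bigl(x,z,\dots ,z,\tfrac t2\bigr)\ast\bigl[F_n\bigl(y,z,\dots ,z,\tfrac{t}{2(n-1)}\bigr)\bigr]^{\,n-1}.
\]
Monotonicity of $\ast$ and the two ball estimates then force $\lambda\ge(1-\varepsilon)\ast(1-\varepsilon)^{\,n-1}=\phi(1-\varepsilon)>\lambda$, a contradiction; hence $U\cap V=\varnothing$. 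Since $x\in U$ and $y\in V$ (both by [M 3], as $F_n(x,x,\dots ,x,\cdot)=1$), $(X,\tau_F)$ is Hausdorff.

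The step that needs care is the matching of parameters in the displayed chain: the time argument of the ball at $y$ must be taken to be $t/(2(n-1))$ precisely so that, once Proposition~\ref{propfuzzy1} redistributes it over $n-1$ factors, the right-hand side collapses to a single $n$-fold $\ast$-power of $1-\varepsilon$ — with naive equal times the exponents and $t$-arguments would not line up. Everything else (openness of balls, the limit $\phi(1-\varepsilon)\to 1$ as $\varepsilon\to0$, and the strictness $\lambda<1$) is routine once [M 5], [M 6] and the continuity of $\ast$ are invoked.
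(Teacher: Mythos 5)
Your proof is correct and takes essentially the same route as the paper's: split the time parameter, apply [M 5] with intermediate point $z$, convert $F_n(z,y,\dots ,y,\cdot)$ into an $(n-1)$-fold power of $F_n(y,z,\dots ,z,\cdot)$ via Proposition~\ref{propfuzzy1}, and contradict the choice of radius made possible by continuity of the $t$-norm (the paper does the same with uniform $t/n$ balls and the telescoped chain written out directly). Your only additions are cosmetic: the $t/2,\ t/(2(n-1))$ parameterization, the explicit check that open balls are open, and the more precise attribution of $0<F_n(x,y,\dots ,y,t)<1$ to [M 1], [M 4] and [M 3] rather than to [M 1] alone.
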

\begin{proof}
Let $x, y$ be two distinct points of $X$. Then by [M 1], we have $0 < F_n(x,y, \dots y,t) < 1$. Let $F_n(x,y, \dots y,t)=r$ for some $r; 0 <
r < 1$. For each $r_0; r < r_0 < 1$, we can find an $r_1$ such that
\begin{equation*}
{\underbrace{r_1\ast r_1\ast \dots \ast r_1}_n}\ge r_0
\end{equation*} 
 Let us consider  the open balls $B_F(x,1-r_1,\frac{t}{n})$ and $B_F(y,1-r_1,\frac{t}{n})$.
We claim that these balls are disjoint. For if $z\in B_F(x,1-r_1,\frac{t}{n})\cap B_F(y,1-r_1,\frac{t}{n})$, then $ F_n(x,z,\dots z,\frac{t}{n})>r_1 $ and $ F_n(y,z,\dots z,\frac{t}{n})>r_1 $ and we have
\begin{align*}
\begin{split}
r=F_n(x,y,\dots y,t)&\ge F_n(x,z,\dots z,\frac{t}{n})\ast F_n(y,z,\dots z,\frac{t}{n})\ast \dots \ast F_n(y,z,\dots z,\frac{t}{n})\\
&>r_1\ast r_1 \ast \dots \ast r_1\ge r_0>r 
\end{split}
\end{align*}
which is a contradiction. Therefore $(X,\tau_F)$ is Hausdorff.
\end{proof}

\begin{definition} Let $(X, F_m,\ast)$ be Generalized fuzzy $m$-metric space. A sequence $<x_n>$ in $X$ is said to be $F_m$-convergent if and only if there exists $x\in X$ such that $F_m(x_n, x_n,...,x_n,x,t)\to 1$ as $n\to \infty$, for each $t>0$. Or equivalently, $F_m(x, x,...,x,x_n,t)\to 1$ as $n\to \infty$, for each $t>0$.
\end{definition}
\begin{definition} Let $(X, F_m,\ast)$ be Generalized fuzzy $m$-metric space. A sequence $<x_n>$ in $X$ is $F_m$-Cauchy if and only if for every $\epsilon >0$ there exists $K\in \mathbb{N}$ such that
\begin{equation}
F_m(x_{n_1}, x_{n_2},...,x_{n_m},t)> 1-\epsilon \; \text{for all} \; n_1,n_2,...,n_m\ge K
\end{equation}
\end{definition}
\begin{definition} A generalized fuzzy $n$-metric space $(X, F_n,\ast)$ is said to be \emph{$F_n$-complete} if every $F_n$-Cauchy sequence in $X$ is $F_n$-convergent in $X$.
\end{definition}

\begin{proposition} Let $(X, F_n,\ast)$ be Generalized fuzzy $n$-metric space. Then $F_n(x_1, x_2,...,x_n,t)$ is
nondecreasing with respect to t, for all $x_1,x_2,\dots ,x_n$ in $X$ and we have
\begin{equation*}
\lim_{t\to \infty}F_n(x_1,x_2,\dots x_n,t)=1
\end{equation*}
\end{proposition}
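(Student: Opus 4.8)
The plan is to prove the two assertions in turn: monotonicity in $t$ comes straight out of the axioms, whereas the limit is the delicate part.

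\emph{Monotonicity.} Fix $x_1,\dots ,x_n\in X$ and $t,s>0$, and apply [M 5] with the auxiliary point chosen to be $x_{n+1}:=x_1$. Then the first factor on the left is $F_n(x_1,x_1,\dots ,x_1,t)$, which equals $1$ by [M 3] since all of its spatial arguments coincide. Using commutativity of $\ast$ together with property (3) of a continuous $t$-norm (so that $1\ast a=a\ast 1=a$ for every $a\in[0,1]$), [M 5] collapses to
\[
F_n(x_1,x_2,\dots ,x_n,s)\le F_n(x_1,x_2,\dots ,x_n,t+s).
\]
As $t>0$ is arbitrary, $t\mapsto F_n(x_1,\dots ,x_n,t)$ is nondecreasing on $(0,\infty)$, and together with [M 6] it is continuous as well. (By [M 4] one may first permute the arguments, so nothing is lost by having placed the repeated coordinate in the first slot.)

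\emph{The limit.} Since $F_n(x_1,\dots ,x_n,\cdot)$ is nondecreasing and bounded above by $1$, the limit $L:=\lim_{t\to\infty}F_n(x_1,\dots ,x_n,t)\le 1$ exists. If $x_1=\dots =x_n$ then $L=1$ by [M 3], so the substance of the claim is the case in which the $x_i$ are not all equal. The natural approach is to feed the limit into the triangle inequality: letting $t,s\to\infty$ in [M 5] and using continuity of $\ast$ gives $L'\ast L''\le L$ for the corresponding limits of the two factors, and one would then iterate this — inserting a well-chosen sequence of auxiliary points and invoking [M 1]--[M 2] — so as to trap $L$ between quantities that are forced to equal $1$.

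I expect this iteration to be the main obstacle, and in fact it cannot be carried through on the strength of [M 1]--[M 6] alone: these axioms, modelled on those of George and Veeramani, do not pin the limit down. Indeed any \emph{stationary} $F_n$ is vacuously nondecreasing yet has $L<1$ as soon as its arguments are not all equal, as is already visible in the $F$-bounded stationary space constructed in Proposition~\ref{propgfuzzy2}; note that in the fuzzy normed space setting recalled earlier the analogous fact is instead imposed as an axiom, namely [N 6]: $\lim_{t\to\infty}N(x,t)=1$. Hence, to make the limit assertion correct, one must add a hypothesis excluding stationarity — for instance a restriction to generalized fuzzy $n$-metrics of the form $F_n(x_1,\dots ,x_n,t)=\varphi(t)/\bigl(\varphi(t)+G_n(x_1,\dots ,x_n)\bigr)$ with $\varphi(t)\to\infty$ (in particular the standard one $\varphi(t)=t$), for which $L=1$ is immediate. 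Under such a hypothesis I would complete the squeezing argument sketched above; without it only the monotonicity half of the proposition holds.
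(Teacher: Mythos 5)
Your monotonicity argument is exactly the paper's: put $x_{n+1}=x_1$ in [M 5], use [M 3] and $1\ast a=a$, and conclude $F_n(x_1,\dots,x_n,t)\le F_n(x_1,\dots,x_n,t+s)$; nothing to add there. On the limit assertion you and the paper part ways, and you are the one on solid ground. The paper offers no argument beyond the words ``and therefore'': monotonicity plus boundedness gives only the existence of $L=\lim_{t\to\infty}F_n(x_1,\dots,x_n,t)\le 1$, not $L=1$, and nothing in [M 1]--[M 6] of Definition~\ref{defgfuzzy} forces $L=1$ (unlike the fuzzy normed setting, where this is imposed as axiom [N 6]). As you observe, any stationary generalized fuzzy metric taking a value below $1$ at non-coincident arguments is a counterexample, and the paper itself supplies one: ${F_r}^{(n)}$ of Proposition~\ref{propgfuzzy2} is certified there as a stationary generalized fuzzy $r$-metric, and for distinct arguments its (constant) value, hence its limit as $t\to\infty$, is strictly less than $1$. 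So the second half of the proposition is not provable from the stated axioms and in fact contradicts the paper's own example; it becomes true only under an additional hypothesis, e.g.\ an [N 6]-type axiom or a restriction to metrics of standard type $\varphi(t)/(\varphi(t)+G_n)$ with $\varphi(t)\to\infty$, exactly as you suggest. In short: your proof of the first half coincides with the paper's, and the gap you identify in the second half is a genuine defect of the paper's proof (and of the proposition as stated), not of your proposal.
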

\begin{proof}
From [M 5] of definition ~\ref{defgfuzzy}, We have 
\begin{equation*}
F_n(x_1,x_{n+1},...,x_{n+1},s)\ast F_n(x_{n+1},x_2,...,x_n,t)\le F_n(x_1,x_2,...,x_n,s+t)
\end{equation*}

Taking $x_{n+1}=x_1$, We have
\begin{equation*}
F_n(x_1,x_1,...,x_1,s)\ast F_n(x_1,x_2,...,x_n,t)\le F_n(x_1,x_2,...,x_n,t+s)
\end{equation*}
\begin{equation*}
\Rightarrow F_n(x_1,x_2,...,x_n,t)\le F_n(x_1,x_2,...,x_n,t+s)
\end{equation*}
and therefore $\lim_{t\to \infty}F_n(x_1,x_2,\dots x_n,t)=1$.
\end{proof}
   
 \begin{proposition} \label{fuzzyprop} Let $(X,F, \ast)$ be a fuzzy metric space. If we define $F_n:X^n\times (0,\infty) \to [0,1]$ by
 \begin{equation}
 \label{fuzcond2}
 F_n(x_1,x_2,\dots x_n,t)=\prod_{1\le i<j\le n} F(x_i,x_j,t)
 \end{equation}
 for every  $x_1,x_2,\dots, x_n\in X$ then $(X,F_n,\ast)$ is a generalized fuzzy n-metric metric space and we have
 \begin{equation}
 \label{fuzcond3}
 [F_n(x_1,x_2,\dots x_n,t)]^{n-2}=\prod_{1\le i_1<i_2<\dots <i_{n-1}\le n} F_{n-1} (x_{i_1},x_{i_2},\dots x_{i_{n-1}},t)
 \end{equation}
 \end{proposition}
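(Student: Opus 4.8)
The plan is to work with the product $t$-norm $a\ast b=a\cdot b$ — the convention used for every example of a generalized fuzzy $n$-metric in the paper, and one that is not merely a matter of taste here, since axiom [M 5] below can fail for $F_n$ when $\ast=\min$ — and then to verify the six axioms of Definition~\ref{defgfuzzy} for the function $F_n$ of \eqref{fuzcond2}, reading each off from the matching property of $F$. The routine ones are immediate. Since each factor $F(x_i,x_j,t)$ lies in $(0,1]$ by [FM 1], the finite product $F_n(x_1,\dots,x_n,t)$ is again strictly positive (in fact in $(0,1]$), so in particular [M 1] holds; a finite product of numbers in $[0,1]$ equals $1$ exactly when every factor does, so by [FM 2] one has $F_n(x_1,\dots,x_n,t)=1$ iff $x_i=x_j$ for all $i<j$, i.e. iff $x_1=\dots=x_n$, which is [M 3]; a permutation of the arguments merely permutes the collection of unordered index pairs $\{i,j\}$, and each factor is symmetric by [FM 3], so the product is unchanged, giving [M 4]; and [M 6] holds because a finite product of the continuous functions $F(x_i,x_j,\cdot)$ supplied by [FM 5] is continuous.

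The substantive axiom is the triangle inequality [M 5]. Writing $y=x_{n+1}$, \eqref{fuzcond2} gives $F_n(x_1,y,\dots,y,t)=[F(x_1,y,t)]^{n-1}$ (the $n-1$ pairs through index $1$ contribute $F(x_1,y,t)$, all other pairs contribute $F(y,y,t)=1$) and $F_n(y,x_2,\dots,x_n,s)=\prod_{k=2}^{n}F(y,x_k,s)\cdot\prod_{2\le i<j\le n}F(x_i,x_j,s)$. Multiplying these and regrouping, the left-hand side of [M 5] becomes $\prod_{k=2}^{n}\big(F(x_1,y,t)F(y,x_k,s)\big)\cdot\prod_{2\le i<j\le n}F(x_i,x_j,s)$. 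Now apply [FM 4] to obtain $F(x_1,y,t)F(y,x_k,s)\le F(x_1,x_k,t+s)$ for each $k$, and use that $t\mapsto F(x,y,t)$ is nondecreasing (by the same argument as the proposition above, applied to $F$) to obtain $F(x_i,x_j,s)\le F(x_i,x_j,t+s)$; multiplying all of these upper bounds yields exactly $\prod_{1\le i<j\le n}F(x_i,x_j,t+s)=F_n(x_1,\dots,x_n,t+s)$, which is [M 5].

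For [M 2], evaluate $F_n(x_1,\dots,x_1,x_2,t)=[F(x_1,x_2,t)]^{n-1}$; cancelling one factor $F(x_1,x_2,t)$ (positive by [FM 1]), the axiom reduces to $[F(x_1,x_2,t)]^{n-2}\ge\prod_{\{i,j\}\ne\{1,2\}}F(x_i,x_j,t)$. Since every factor is $\le 1$, it is enough to keep the pairs meeting $\{1,2\}$, i.e. to prove $[F(x_1,x_2,t)]^{n-2}\ge\prod_{k=3}^{n}F(x_1,x_k,t)F(x_k,x_2,t)$, and this I would get by multiplying together the $n-2$ estimates $F(x_1,x_2,t)\ge F(x_1,x_k,t)F(x_k,x_2,t)$, $k=3,\dots,n$. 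This last inequality is the one delicate point: it is the triangle property at equal time arguments with the product $t$-norm, which is not literally [FM 4] (that one carries $t+s$), but it does hold for the standard fuzzy metric $M_d$ and, more generally, for strong fuzzy metrics — the class relevant to the filtering application. I expect this to be the main obstacle, the step needing the most care to state and justify cleanly.

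Finally, the identity \eqref{fuzcond3} is purely combinatorial. Substituting the definition of $F_{n-1}$, its right-hand side is $\prod_{S}\prod_{\{p,q\}\subseteq S}F(x_p,x_q,t)$ with $S$ ranging over the $(n-1)$-element subsets of $\{1,\dots,n\}$. Interchanging the two products, the factor $F(x_p,x_q,t)$ occurs once for each such $S$ containing both $p$ and $q$; but such an $S$ is obtained by deleting a single index different from $p$ and $q$, so there are exactly $n-2$ of them. Hence the right-hand side equals $\prod_{1\le p<q\le n}F(x_p,x_q,t)^{\,n-2}=\big[\prod_{1\le p<q\le n}F(x_p,x_q,t)\big]^{n-2}=[F_n(x_1,\dots,x_n,t)]^{n-2}$, which is \eqref{fuzcond3}.
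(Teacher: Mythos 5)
Your handling of [M 1], [M 3], [M 4], [M 6] and of the identity \eqref{fuzcond3} matches the paper: the paper likewise treats those axioms as immediate and proves \eqref{fuzcond3} by exactly your counting argument (each pair lies in precisely $n-2$ of the $(n-1)$-element subsets). For the triangle axiom [M 5] you take a genuinely different and in fact cleaner route. The paper argues by induction on $n$, with a base case $n=3$ and a step $k\to k+1$, repeatedly invoking [FM 4] and monotonicity in $t$; your direct argument (write $F_n(x_1,y,\dots,y,t)=F(x_1,y,t)^{n-1}$, pair each factor as $F(x_1,y,t)F(y,x_k,s)\le F(x_1,x_k,t+s)$, and absorb the remaining factors via $F(x_i,x_j,s)\le F(x_i,x_j,t+s)$) settles all $n$ at once. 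It also avoids a weak point in the paper's base case: the last line of the paper's $n=3$ computation needs $F_3(x_1,x_2,x_4,t)\ge F_3(x_1,x_4,x_4,t)$, i.e.\ $F(x_1,x_2,t)F(x_2,x_4,t)\ge F(x_1,x_4,t)$, which is not a consequence of the George--Veeramani axioms; your choice of how to apply [FM 4] never requires it. Your decision to work with the product t-norm is consistent with the paper, whose proof manipulates the product in \eqref{fuzcond2} as a $\ast$-product, and your side remark that [M 5] can fail for $\ast=\min$ is correct (a three-point stationary fuzzy metric already gives a violation).

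The gap you flag at [M 2] is genuine, and it is the right thing to worry about: for $F_n$ as in \eqref{fuzcond2} the case $n=3$ of [M 2] is literally $F(x_1,x_2,t)\ge F(x_1,x_3,t)F(x_3,x_2,t)$, the triangle inequality at equal time arguments, i.e.\ the statement that $F$ is a \emph{strong} fuzzy metric. This does not follow from [FM 1]--[FM 5]; non-strong fuzzy metrics in the sense of George and Veeramani exist, and for such an $F$ the axiom [M 2] for $F_n$ actually fails, so no argument can close this gap without an additional hypothesis. Be aware that the paper does not resolve it either: it simply declares every axiom except [M 5] to be trivially satisfied, so on this point your treatment is the more careful one. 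But as a proof of the proposition exactly as stated, your argument (like the paper's) is incomplete at [M 2]; the honest fix is to add the hypothesis that $F$ is strong (satisfied by the standard fuzzy metric and by the stationary metrics used in the filtering application), under which your termwise multiplication of the $n-2$ estimates completes the verification.
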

 \begin{proof}
 It is trivial to show that $F_n(x_1,x_2,\dots x_n,t)$ defined by equation ~\ref{fuzcond2} satisfies all the axioms but [M 5] of Generalized fuzzy n-metric. We use induction to show that [M 5] holds. For $n=3$ we have
 \begin{align*}
 \begin{split}
 F_3(x_1,x_2,x_3,t)&=\prod_{1\le i<j\le 3} F(x_i,x_j,t+s)\\
                   &=F(x_1,x_2,t+s)\ast F(x_1,x_3,t+s) \ast F(x_2,x_3,t+s)\\
                   &\ge F(x_1,x_4,t)\ast F(x_4,x_2,s)\ast F(x_1,x_2,t)\ast F(x_2,x_3,s)\ast F(x_2,x_4,t)\ast F(x_4,x_3,s)\\
                   &=F_3(x_1,x_2,x_4,t)\ast F_3(x_2,x_3,x_4,s)\\
                   &\ge F_3(x_1,x_4,x_4,t)\ast F_3(x_2,x_3,x_4,s)\\
 \end{split}
 \end{align*} 
 Let the condition [M 5] holds for all $n$ up to $n=k$. For $n=k+1$ we have
  \begin{align*}
  \begin{split}
  F_{k+1}(x_1,x_2,\dots x_{k+1},t+s)&=\prod_{1\le i<j\le {k+1}} F(x_i,x_j,t+s)\\
  &=F_k(x_1,x_2,\dots x_k,t+s)\ast \prod_{1\le i\le k} F(x_i,x_{k+1},t+s)\\
  &\ge F_k(x_1,x_{k+2},\dots x_{k+2},t)\ast F_k(x_{k+2},x_2,\dots x_k,s)\ast F(x_1,x_{k+1},t+s)\\
  &\qquad \ast  \prod_{2\le i\le k} F(x_i,x_{k+1},t+s)\\
  &\ge [F(x_1,x_{k+2},t)]^{k-1}\ast F_k(x_{k+2},x_2,\dots x_k,s)\ast F(x_1,x_{k+2},t) \\
  &\qquad \ast F(x_{k+2},x_{k+1},s) \ast  \prod_{2\le i\le k} F(x_i,x_{k+1},t+s)\\ 
  &\ge [F(x_1,x_{k+2},t)]^k\ast F_k(x_{k+2},x_2,\dots x_k,s) \ast F(x_{k+2},x_{k+1},s)\\
    &\qquad  \ast  \prod_{2\le i\le k} F(x_i,x_{k+1},s) \qquad \text{as} \quad F(x_i,x_{k+1},t+s) \ge F(x_i,x_{k+1},s)\\
    &= F_{k+1}(x_1,x_{k+2}\dots x_{k+2},t)\ast F_{k+1}(x_{k+2},x_2,\dots x_{k+1},s)
  \end{split}
  \end{align*}
 Thus $F_n(x_1,x_2,\dots x_n,t)$ defined by ~\ref{fuzcond2} is a Generalized fuzzy n-metric.
 \newline Obviously $F_2(x_i,x_j,t)=F(x_i,x_j,t)$. Hence from ~\ref{fuzcond2} it follows that
 \begin{equation*}
\prod_{1\le i_1<i_2<\dots <i_{n-1}\le n} F_{n-1} (x_{i_1},x_{i_2},\dots x_{i_{n-1}},t)=\prod_{1\le i_1<i_2<\dots <i_{n-1}\le n}  \prod_{\substack{i<j\\(i,j)\in \{i_1,i_2\dots i_{n-1}\}}} F_2(x_i,x_j,t)
 \end{equation*}
 Since each pair $(i,j), i<j$ is in exactly $(n-2)$ sequences $\{i_1,i_2\dots i_{n-1}\},1\le i_1<i_2<\dots <i_{n-1}\le n$, hence
  \begin{equation*}
 \prod_{1\le i_1<i_2<\dots <i_{n-1}\le n} F_{n-1} (x_{i_1},x_{i_2},\dots x_{i_{n-1}},t)=[\prod_{1\le i <j\le n}  F_2(x_i,x_j,t)]^{n-2}=[F_n(x_1,x_2,\dots x_n,t)]^{n-2}
  \end{equation*}
  \end{proof}
   
 From above proposition, it is clear that for a given fuzzy metric $F_2 (x,y,t)$ we can always define a generalized fuzzy $n$-metric given by equation ~\ref{fuzcond2}. The converse is also true-
 \begin{proposition}
  If $F_n$ is a generalized fuzzy $n$-metric defined on $X$ then there exists a fuzzy metric $M$ on $X$ given by
  \begin{equation} \label{fuzzyeq1}
  M(x,y,t)=F_n(x,y,y,...,y,\frac{t}{2})\ast F_n(x,x,...x,y,\frac{t}{2})
 \end{equation}
 \end{proposition}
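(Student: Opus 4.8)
The plan is to verify the five axioms [FM 1]--[FM 5] of Definition~\ref{deffuzzy} for the map $M$ defined in \eqref{fuzzyeq1}. Throughout I abbreviate $A := F_n(x,y,\dots,y,\frac t2)$ and $B := F_n(x,\dots,x,y,\frac t2)$, so that $M(x,y,t)=A\ast B$, and likewise $C := F_n(y,z,\dots,z,\frac s2)$ and $D := F_n(y,\dots,y,z,\frac s2)$, so that $M(y,z,s)=C\ast D$.

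Axioms [FM 1], [FM 2], [FM 3] and [FM 5] are the routine part. Using the permutation invariance [M 4] one may rewrite $A=F_n(y,\dots,y,x,\frac t2)$ and $B=F_n(y,x,\dots,x,\frac t2)$; then [M 1] gives $A,B>0$ whenever $x\neq y$ (while $A=B=1$ when $x=y$, by [M 3]), which yields [FM 1], $M(x,y,t)=A\ast B>0$. Commutativity of $\ast$ together with [M 4] gives $M(x,y,t)=M(y,x,t)$, i.e.\ [FM 3]. For [FM 2]: if $x=y$ then $A=B=1$ and $M(x,x,t)=1\ast1=1$; conversely $M(x,y,t)=1$ forces $A\ast B=1$ with $A,B\in[0,1]$, so $1=A\ast B\le A\ast1=A\le1$ and similarly $B=1$, and [M 3] then gives $x=y$. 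For [FM 5], $t\mapsto M(x,y,t)$ is a composition of the continuous $t$-norm with the two maps $t\mapsto F_n(x,y,\dots,y,\frac t2)$ and $t\mapsto F_n(x,\dots,x,y,\frac t2)$, which are continuous by [M 6]; hence it is continuous, and it lies in $(0,1]$ because $A\ast B\le A\ast1=A\le1$ and $A\ast B>0$.

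The substantive step is the triangle inequality [FM 4], namely $M(x,z,t+s)\ge M(x,y,t)\ast M(y,z,s)$, and this is driven by [M 5] after splitting $\frac{t+s}{2}=\frac t2+\frac s2$. Applying [M 5] with $x_1=x$, $x_2=\dots=x_n=z$, $x_{n+1}=y$ and parameters $\frac t2,\frac s2$ gives
\[
A\ast C=F_n\big(x,y,\dots,y,\tfrac t2\big)\ast F_n\big(y,z,\dots,z,\tfrac s2\big)\le F_n\big(x,z,\dots,z,\tfrac{t+s}{2}\big).
\]
Rewriting $B$ and $D$ via [M 4] and applying [M 5] again with $x_1=z$, $x_2=\dots=x_n=x$, $x_{n+1}=y$, parameters $\frac s2,\frac t2$, then relabelling back with [M 4], gives
\[
B\ast D=F_n\big(x,\dots,x,y,\tfrac t2\big)\ast F_n\big(y,\dots,y,z,\tfrac s2\big)\le F_n\big(x,\dots,x,z,\tfrac{t+s}{2}\big).
\]
Multiplying these two inequalities using the monotonicity property (4) of $\ast$ yields
\[
M(x,z,t+s)=F_n\big(x,z,\dots,z,\tfrac{t+s}{2}\big)\ast F_n\big(x,\dots,x,z,\tfrac{t+s}{2}\big)\ge(A\ast C)\ast(B\ast D),
\]
and by associativity and commutativity of $\ast$ the right-hand side equals $(A\ast B)\ast(C\ast D)=M(x,y,t)\ast M(y,z,s)$, which is [FM 4].

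The only genuine difficulty is the bookkeeping inside [FM 4]: one must recognise that the two factors $F_n(x,z,\dots,z,\cdot)$ and $F_n(x,\dots,x,z,\cdot)$ constituting $M(x,z,t+s)$ have to be produced by two \emph{separate} applications of [M 5] — the second only after relabelling the repeated argument through [M 4] — and one must keep the parameters $\frac t2$ and $\frac s2$ aligned so that each application lands exactly on $\frac{t+s}{2}$. It is also prudent to check the degenerate configurations where two or three of $x,y,z$ coincide, but these cause no trouble since [M 3] collapses the affected $F_n$-factors to $1$, which is the unit of $\ast$. One small caveat concerning [FM 1]: the implication ``$A>0$ and $B>0$ imply $A\ast B>0$'' uses that the $t$-norm has no zero divisors, which holds for the product and minimum $t$-norms considered in this paper.
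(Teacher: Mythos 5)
Your verification is correct and supplies exactly what the paper dismisses with ``It is easy to verify'': the only nontrivial point is [FM 4], which you handle the natural way, splitting $\frac{t+s}{2}=\frac t2+\frac s2$ and applying [M 5] twice (the second time after relabelling via [M 4]) before recombining with the monotonicity, associativity and commutativity of $\ast$. Your closing caveat is also well taken: for a continuous $t$-norm with zero divisors (e.g.\ $a\ast b=\max(a+b-1,0)$) the positivity in [FM 1] does not follow from $A,B>0$, so the proposition as stated implicitly needs a positive $t$-norm such as the product or minimum --- a point the paper's one-line proof glosses over.
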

\begin{proof}
 It is easy to verify that $M$ is indeed a fuzzy metric (In the sense of George and Veeramani) defined on $X$.
\end{proof}
 \begin{proposition}
 Let $B_M(x,r,t)$ denote the open ball in the fuzzy metric space $(X,M,\ast)$ and $B_F(x,r,t)$ the open ball in the corresponding generalized fuzzy $n$-metric space $(X,F_n,\ast)$. Then for $0<r<1$ and $t>0$ there exists $0<s<1$ such that $B_F(x,\frac{r}{n},\frac{t}{n-1})\subseteq B_M(x,s,2t)$.
 \begin{proof}
 Let $y\in B_F(x,\frac{r}{n},\frac{t}{n-1})$, then $F_n(x,y,y,...,y,\frac{t}{n-1})>1-\frac{r}{n}$. By proposition ~\ref{propfuzzy1} and [M 4] we have
 \begin{align*}
 \begin{split}
 F_n(x,x,...,x,y,t)&\ge \Big[F_n\Big(x,y,y,...,y,\frac{t}{n-1}\Big)\Big]^{n-1}\\
 &>\Big (1-\frac{r}{n}\Big)^{n-1}
 \end{split}
 \end{align*}
 Since $F_n$ is nondecreasing, hence
 \begin{equation*}
 F_n(x,y,y,...,y,t)\ge F_n\Big(x,y,y,...,y,\frac{t}{n-1}\Big)>1-\frac{r}{n}
 \end{equation*}
 Using equation ~\ref{fuzzyeq1}, we have
 \begin{align*}
 \begin{split}
 M(x,y,2t)&=F_n(x,y,...,y,t)\ast F_n(x,x,...,x,y,t)\\
 &>\Big (1-\frac{r}{n}\Big)^{n-1}\ast \Big(1-\frac{r}{n}\Big)=\Big (1-\frac{r}{n}\Big)^{n} 
 \end{split}
 \end{align*}
 Now $0<\frac{r}{n}<1$, hence there exists $0<s<1$ such that 
 \begin{equation*}
 \underbrace{\Big(1-\frac{r}{n}\Big)\ast \Big(1-\frac{r}{n}\Big)\ast ...\ast \Big(1-\frac{r}{n}\Big)}_n>1-s
 \end{equation*}
 Thus $M(x,y,2t)>1-s$, i.e. $y\in B_M(x,s,2t)$. Hence the result.
 \end{proof}
 
 \end{proposition}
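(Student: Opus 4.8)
The plan is to unwind the definition of the induced fuzzy metric $M$ given in equation~\ref{fuzzyeq1} and to feed into it the two one-sided estimates that $F_n$ already supplies: the $(n-1)$-fold power bound of Proposition~\ref{propfuzzy1} and the monotonicity of $F_n$ in its time argument established earlier. First I would fix an arbitrary point $y\in B_F\bigl(x,\tfrac{r}{n},\tfrac{t}{n-1}\bigr)$, so that by the definition of the open ball $F_n(x,y,y,\dots,y,\tfrac{t}{n-1})>1-\tfrac{r}{n}$, and I would aim to exhibit a single $s\in(0,1)$, depending only on $r$ and $n$, with $M(x,y,2t)>1-s$; this is precisely the statement $y\in B_M(x,s,2t)$, and since $y$ is arbitrary it yields the asserted inclusion.

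Next I would bound the two factors that appear in equation~\ref{fuzzyeq1} when it is evaluated at time $2t$, namely $F_n(x,y,\dots,y,t)$ and $F_n(x,x,\dots,x,y,t)$. For the first, monotonicity of $F_n(x,y,\dots,y,\cdot)$ together with $t\ge\tfrac{t}{n-1}$ (valid since $n\ge 3$) gives $F_n(x,y,\dots,y,t)\ge F_n(x,y,\dots,y,\tfrac{t}{n-1})>1-\tfrac{r}{n}$. For the second, I would apply Proposition~\ref{propfuzzy1} with the roles of $x$ and $y$ interchanged and then rewrite its left-hand side by the permutation axiom [M 4], obtaining $F_n(x,x,\dots,x,y,t)\ge\bigl[F_n(x,y,\dots,y,\tfrac{t}{n-1})\bigr]^{n-1}>\bigl(1-\tfrac{r}{n}\bigr)^{n-1}$, where the exponent denotes the iterated $\ast$-product as in Proposition~\ref{propfuzzy1}. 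Substituting both bounds into equation~\ref{fuzzyeq1} and using the monotonicity property (4) of the $t$-norm yields $M(x,y,2t)=F_n(x,y,\dots,y,t)\ast F_n(x,x,\dots,x,y,t)\ge\bigl(1-\tfrac{r}{n}\bigr)\ast\bigl(1-\tfrac{r}{n}\bigr)^{n-1}$, and associativity of $\ast$ rewrites the right-hand side as the $n$-fold $\ast$-product of $1-\tfrac{r}{n}$ with itself.

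Finally, since $0<\tfrac{r}{n}<1$, this $n$-fold $\ast$-product is a number in $[0,1)$ determined by $r$ and $n$ alone, so one may pick $s\in(0,1)$ with $1-s$ strictly below it; then $M(x,y,2t)\ge\bigl(1-\tfrac{r}{n}\bigr)^{n}>1-s$ for every admissible $y$, which finishes the argument. The bookkeeping with the repeated entries of $F_n$ and with the exponent $n-1$ is routine; the step that deserves care is the last one, where $s$ must be chosen independently of $y$ — harmless here because the lower bound involves only $r$ and $n$ — and where one implicitly uses that the $n$-fold $\ast$-product of $1-\tfrac{r}{n}$ is strictly positive so that a genuine $s<1$ exists (this is automatic for the product $t$-norm $a\ast b=ab$, the one relevant to the image-filtering application). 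The main conceptual point, rather than any hard estimate, is to realize that Proposition~\ref{propfuzzy1} must be invoked in the $x\leftrightarrow y$ swapped form and then symmetrized through [M 4], so that its conclusion matches the second factor occurring in equation~\ref{fuzzyeq1}.
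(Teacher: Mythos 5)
Your argument is correct and follows essentially the same route as the paper's own proof: bound $F_n(x,y,\dots,y,t)$ by monotonicity in $t$, bound $F_n(x,x,\dots,x,y,t)$ via Proposition~\ref{propfuzzy1} combined with [M 4], combine the two through equation~\ref{fuzzyeq1} and the monotonicity of $\ast$, and choose $s$ from the resulting $n$-fold $\ast$-product of $1-\tfrac{r}{n}$. If anything, you are slightly more careful than the paper in noting that the choice of $s$ requires this $n$-fold product to be strictly positive (automatic for the product $t$-norm used in the application) and in using $\ge$ rather than strict inequality when invoking monotonicity of the $t$-norm.
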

  Therefore the topology $\tau_F$ induced by the generalized $n$-fuzzy metric on $X$ coincides with the topology $\tau_M$ induced by the fuzzy metric $M$ implying that every generalized fuzzy $n$-metric space is topologically equivalent to a fuzzy metric space.
  Since the topological space $(X, \tau_M)$ is metrizable \cite{11},  $(X,\tau_F)$ is also a metrizable space.

   It is observed that fuzzy metrics are useful in improving filters for colour image filtering. In the following, we discuss the relevant work, some definitions and our proposals to use generalized fuzzy metrics for constructing new possible filters.
   
  \section{Applications to colour image Filtering} Any image gets affected due to the noise introduced during the acquisition and transmission process. The presence of noise corrupts the visualization quality of image and affects the image processing steps such as pattern recognition, image segmentation etc. Therefore, The elimination or suppression of noise, i.e. the image filtering is an essential part in any computer vision system. The vector approach (\cite{2},\cite{16},\cite{22}) is more appropriate for the elimination of noise compared to other approaches. In this approach, each pixel value is considered as an $m$-dimensional vector. where $m$ is the number of image channels (for colour images $m=3$). The operation of most of the nonlinear filters is based on robust order statistics (\cite{3},\cite{21},), i.e. Marginal ordering (M-ordering), Conditional ordering (C-ordering), Partial ordering (P-ordering) and Reduced or aggregated ordering (R-ordering). The colour images are treated as a vector field \cite{16} and the filter selects the output vector on the basis of the ordering of vectors in a defined sliding window. This window is moved over the input image affecting all the image pixels. 
  
   Let $W$ be a processing window of size $n$ and let $I_j, j=1,2,..,n$ be the noisy image vectors(pixels) in the filtering window. To order colour vectors $I_1,I_2,...,I_n$ located inside the window $W$, the R-ordering based vector filters use the aggregated distances, i.e. $D^i=\sum_{j=1}^{n} d(I_i,I_j)$ or the aggregated similarities, i.e. $S^i=\sum_{j=1}^{n} s(I_i,I_j)$ associated with the input vector $I_i$ for $i=1,2,...,n$. Then the ordered sequence of aggregated distances $D^i: D^{(1)}\le D^{(2)}\le ...\le D^{(n)}$ implies the same ordering of the corresponding vectors $I_i: I_{(1)}\le I_{(2)},...\le I_{(n)}$. Most of the filters use the lowest ranked vector $I_{(1)}$ as the output due to the fact that the higher indexed vectors in the ordered sequence diverge greatly from the data population. Nonlinear multichannel filters \cite{20} utilizes various distance measures for image filtering purposes. The most well known representative of this class of filters is the vector median filter(VMF)(\cite{2},\cite{20}). 
   
 The classical vector filters often tend to blur image edges and details. Recently a number of fuzzy based methods (\cite{6},\cite{17},\cite{26},\cite{31},\cite{32}) have been proposed  to address these drawbacks. In an attempt to rectify the performance of classical vector filters, Morillas et al \cite{17} used a special fuzzy metric as distance criterion instead of the classical metrics used in VMF. They defined the fuzzy distance between two pixel vectors $I_i=\Big(I_i(1),I_i(2),I_i(3)\Big)$ and $I_j=\Big(I_j(1),I_j(2),I_j(3)\Big)$ as  
\begin{equation}
 M(I_i,I_j)=\prod_{l=1}^3 \frac{\min\{I_i(l),I_j(l)\}+K}{\max\{I_i(l),I_j(l)\}+K}
\end{equation}
Where $I_i(l),I_j(l)\in \text{integer values in}\;[0,255]$ for the processing of RGB images and $K>0$. According to \cite{17} the appropriate value of $K$ for RGB colour vectors is $1024$. It can be verified that $(M,\ast)$ is a stationary $F$-bounded fuzzy metric $M(I_i,I_j)$ on $X^3$, Where $X$ is a closed real interval $[a,b]$ and $a\ast b=a\cdot b$. In their proposal, the scalar quantity $M^i=\sum_{j=1,j\ne i}^{n} M(I_i,I_j)$ is the accumulated fuzzy distance associated with the vector $I_i$. The ordering of $M^i$ with fuzzy rules(\cite{1},\cite{4},\cite{17}) is now defined as $M^i:M^{(1)}\ge M^{(2)}\ge ... \ge M^{(n)}$. Which implies the ordering of vectors$I_i$ as $I_i:I_{(1)}\ge I_{(2)}\ge ...\ge I_{(n)}$. Finally the output vector is defined as $I_{(1)}$.

We propose to replace the above fuzzy metric by a more general fuzzy metric ${F_r}^{(3)}(I_1,I_2,...,I_r,t)$ defined in proposition ~\ref{propgfuzzy2}. Since ${F_r}^{(3)}$ is stationary, We simply denote it as ${F_r}^{(3)}(I_1,I_2,...,I_r)$. Now We define the accumulated fuzzy measure associated to the vector $I_i$ as
\begin{equation}\label{eqgfuzzy3}
    D^i=\sum_{\substack{1\le i_1<i_2<...<i_r\le n\\i_k\ne i,\, k=1,2,...,r-1}} {F_r}^{(3)}(I_i,I_{i_1},...,I_{i_{r-1}})
\end{equation}
Here We have more choices in terms of different values of $r$. The value of $r$ can be chosen so as to make the algorithm computationally more efficient.  For $r=2$ the metric ${F_2}^{(3)}(I_i,I_{i_1})$ coincides with the fuzzy metric $M(I_i,I_{i_1})$ employed by Morillas et al. The fuzzy metric $M(I_i,I_j)$ can be thought of as the degree of nearness between the pixels represented by the vectors $I_i$ and $I_j$. Similarly the metric ${F_r}^{(3)}(I_{i_1},...,I_{i_r})$ expresses some type of nearness of the pixels $I_{i_1},...,I_{i_r}$.
    
One way to use equation ~\eqref{eqgfuzzy3} efficiently is to choose $r=3$. Then the fuzzy metric ${F_3}^{(3)}(I_{i_1},I_{i_2},I_{i_3})$ will be a particular stationary form of a more generalized fuzzy metric $F_3(x,y,z,t)$. This fuzzy metric can be induced by a $G$-metric $G(x,y,z)$ introduced by Mustafa and Sims \cite{18}. One interpretation of such metric could be that $F_3(x,y,z,t)=\alpha$ if and only if the probability 
     $P[\text{(perimeter of the triangle with
      vertices}\\
       \; x,y\;  \text{and}\; z)\le t]=\alpha$.
     
Thus we can propose a new filter by taking fuzzy analogue of perimeter of a triangle formed by three pixels instead of fuzzy distance between two pixels. Then $D^i$ will represent the sum of fuzzy analogue perimeters of all the triangles formed with the pixel $I_i$ as one vertex. So an appropriate fuzzy metric for our new filter could be proposed as  
\begin{equation} \label{eqgfuzzy4}
   {F_3}^{(3)}(I_{i_1},I_{i_2},I_{i_3})=\prod_{l=1}^3\frac{\min\{I_{i_1}(l),I_{i_2}(l),I_{i_3}(l)\}+K }{\max\{I_{i_1}(l),I_{i_2}(l),I_{i_3}(l)\}+K}
\end{equation}
Where $I_{i_p}=\Big(I_{i_p}(1),I_{i_p}(2),I_{i_p}(3)\Big)\in \text{integer values in}\; [0,255],\; p=1,2,3$ for the processing of RGB images and $K>0$. The non-uniformity of the fuzzy metric ${F_3}^{(3)}$ can be smoothed by taking large values of $K$. However if $K\rightarrow \infty$ then ${F_3}^{(3)}\to 1$, hence the use of very large values should be avoided. We can decide an appropriate value of $K$ by analyzing the performance(MSE) of the fuzzy metric ${F_3}^{(3)}$ with respect to different values of $K$. 
Then the accumulated fuzzy measure associated to the vector $I_i$ as
\begin{equation}
        D^i=\sum_{\substack{1\le i_1<i_2\le n\\i_1,I_2\ne i}} {F_3}^{(3)}(I_i,I_{i_1},I_{i_2})
\end{equation}
We can avoid the repetitive effect of some extra terms in the sum $D^i$ by following an appropriate algorithm. For example, in a 8-neighbourhood $3\times 3$ window if the pixels $(1,1),(1,2),(1,3),(2,1)...,(3,3)$ are numbered as $1,2,3,..,9$ respectively,then the pixels numbered $1,3,7$ and $9$ will have similar spatial neighborhoods. Similarly pixels numbered $2,4,6$ and $8$ are spatially similar. The central pixel is that numbered $5$. We can use the following scheme to evaluate $D^1$, $D^2$ and $D^5$ avoiding the terms contributing the repetition.
 
$D^1={F_3}^{(3)}(I_1,I_2,I_4)+{F_3}^{(3)}(I_1,I_3,I_7)+{F_3}^{(3)}(I_1,I_6,I_8)+{F_3}^{(3)}(I_1,I_5,I_9)$,\\

$D^2={F_3}^{(3)}(I_2,I_1,I_3)+{F_3}^{(3)}(I_2,I_4,I_6)+{F_3}^{(3)}(I_2,I_7,I_9)+{F_3}^{(3)}(I_2,I_5,I_8)$\\

$D^5={F_3}^{(3)}(I_5,I_1,I_9)+{F_3}^{(3)}(I_5,I_2,I_4)+{F_3}^{(3)}(I_5,I_3,I_7)+{F_3}^{(3)}(I_5,I_6,I_8)$

The other accumulated fuzzy measures i.e. $D^3,D^4,...$ can be computed using a similar approach.

Since the deviation of $D^i$ from the accumulated fuzzy distance $\sum_{j=1,j\ne i}^n M(I_i,I_j)$ is not very large for usual window size (n=9), hence the vector order statistics techniques for a VMF(fuzzy) proposed in \cite{17} can be used here. Therefore the filter output for our proposed filter will be $\tilde{I_k}\in W$ that maximizes the aggregated fuzzy measure to the other vectors in $W$, i.e.
$I_\text{out}=\tilde{I_k}$ for which
\begin{equation*}
k=\arg \max_i (D^i),\quad i=1,2,...,n
\end{equation*}

This filter is not a direct generalization of VMF and therefore cannot be termed as some kind of median filter. However it shares some commonalities with VMF such as the use of same vector order\textendash statistics and the use of same kind of fuzzy metric, hence We may call it  \emph{Fuzzy Vector Median-Like Filter} (FVMLF).
 \section{Future Directions}
 \begin{itemize}
\item[(1)]
There could be other more efficient ways to choose the value of $r$ in equation ~\ref{eqgfuzzy3}. For example, If we choose $(r=n-1)$, the following scheme can be analyzed for computational complexity and performance analysis\textendash
\begin{equation*}
 D^k=
\begin{cases}
 {F^{(3)}_{n-1}}(I_1,I_2,...,I_{n-1})+{F^{(3)}_{n-1}}(I_1, I_3,...,I_{n}) \qquad \text{if } k=1,\\
 
 {F^{(3)}_{n-1}}(I_k, I_1,...,I_{k-2},I_{k+1},...,I_{n-1},I_n)+{F^{(3)}_{n-1}}(I_k, I_1,...,I_{k-1},I_{k+2},...,I_{n}),\\
  \qquad \qquad \qquad \qquad \qquad \qquad \qquad \qquad \qquad \qquad \qquad \text{if } 1<k<n-1,\\
 {F^{(3)}_{n-1}}(I_{n-1},I_1,...,I_{n-3},I_n)+{F^{(3)}_{n-1}}(I_{n-1},I_1,...,I_{n-2}) \qquad \text{if } k=n-1,\\
 {F^{(3)}_{n-1}}(I_n,I_1,...,I_{n-2})+{F^{(3)}_{n-1}}(I_n,I_2,...,I_{n-1}) \qquad \text{if } k=n.
 \end{cases}
 \end{equation*}

It would be interesting to see whether we could still use the fuzzy rule based order statistics (\cite{1},\cite{17}) here.
\item[(2)]In above proposals, our approach is inspired by the VMF introduced in \cite{17}. However, The equation ~\ref{eqgfuzzy3} is a more general relation giving space for using generalized fuzzy metrics ${F_r}^{(3)}(I_i,I_{i_1},...,I_{i_{r-1}})$ other than of the form given in equation ~\ref{eqgfuzzy4}. We can try some other accumulated fuzzy measures associated to the vector $I_i$ similar to the proposed one. 
\item[(3)] We also see a possibility to construct a \emph{Geometric mean filter} using the concept of fuzzy norm and the generalized fuzzy metric. A geometric mean filter replaces the colour value of each pixel with the geometric mean of colour pixel values of the neighboring cells (including the centered pixel) of the filtering window. Let $I_1,I_2,...,I_p$ be the noisy image vectors (pixels) in a $p (=k\times k)$ size filtering window centered at $I_j$. Now in view of propositions ~\ref{fuzzynorm} and ~\ref{fuzzyprop}, the generalized fuzzy $n$-metric can be given as
\begin{equation}\label{eqgfuzzy5}
   F_n(x_1,x_2,\dots x_n,t)=\prod_{1\le i<j\le n} M(x_i,x_j,t)=\prod_{1\le i<j\le n} N(x_i-x_j,t)
\end{equation}
Equation~\ref{eqgfuzzy5} gives us a way to define some fuzzy norm $N(I_i,t)$ associated with each vector $I_i, i=1,2,...,p$ and then we have a Geometric mean filter which replaces the vector $I_j$ by the vector $\tilde{I_j}$ such that
\begin{equation}
N(\tilde{I_j},t)=\Big[\prod_{1\le i\le p}N(I_i,t)\Big]^{1/p}
\end{equation}
 \end{itemize}
From practical application point of view, the performance of our proposed filters can be evaluated by using standard measures like Mean Absolute Error(MAE), Peak Signal to Noise Ratio(PSNR) and Normalized Colour Difference(NCD) and then the proposed filters can be included within more complex filtering procedures to process the most noisy regions of the colour images or to reduce the impulse noise. we do hope that this study will provide the basis of much future research.

\section*{Acknowledgments}
We are thankful to the referees for their valuable comments and suggestions on this manuscript.

\providecommand{\bysame}{\leavevmode\hbox
to3em{\hrulefill}\thinspace}


\end{document}